\documentclass[12pt]{amsart}
\usepackage{amsmath,amsthm,amssymb,amsaddr,mathabx,subfigure,verbatim,fullpage,graphicx,charter,caption,color,mathtools}
\usepackage{hyperref}
\hypersetup{ colorlinks = true }
\newtheorem{theorem}{Theorem}

\newtheorem{lemma}{Lemma}
\newtheorem{claim}{Claim}
\newtheorem{definition}{Definition}

\newcommand\blfootnote[1]{%
\begingroup
\renewcommand\thefootnote{}\footnote{#1}%
\addtocounter{footnote}{-1}%
\endgroup}

\author{Jean Cardinal}
\address{Universit\'e libre de Bruxelles (ULB), Brussels, Belgium.\\
  {\tt jcardin@ulb.ac.be}}
\author{Stefan Langerman}
\address{Universit\'e libre de Bruxelles (ULB), Brussels, Belgium.\\
  {\tt slanger@ulb.ac.be}}
\author{Pablo P\'erez-Lantero}
\address{Universidad de Santiago, Santiago, Chile.\\
  {\tt pablo.perez.l@usach.cl}.}

\title{On the diameter of tree associahedra}
\begin{document}
\maketitle

\blfootnote{\begin{minipage}[l]{0.3\textwidth} \includegraphics[trim=10cm 6cm 10cm 5cm,clip,scale=0.15]{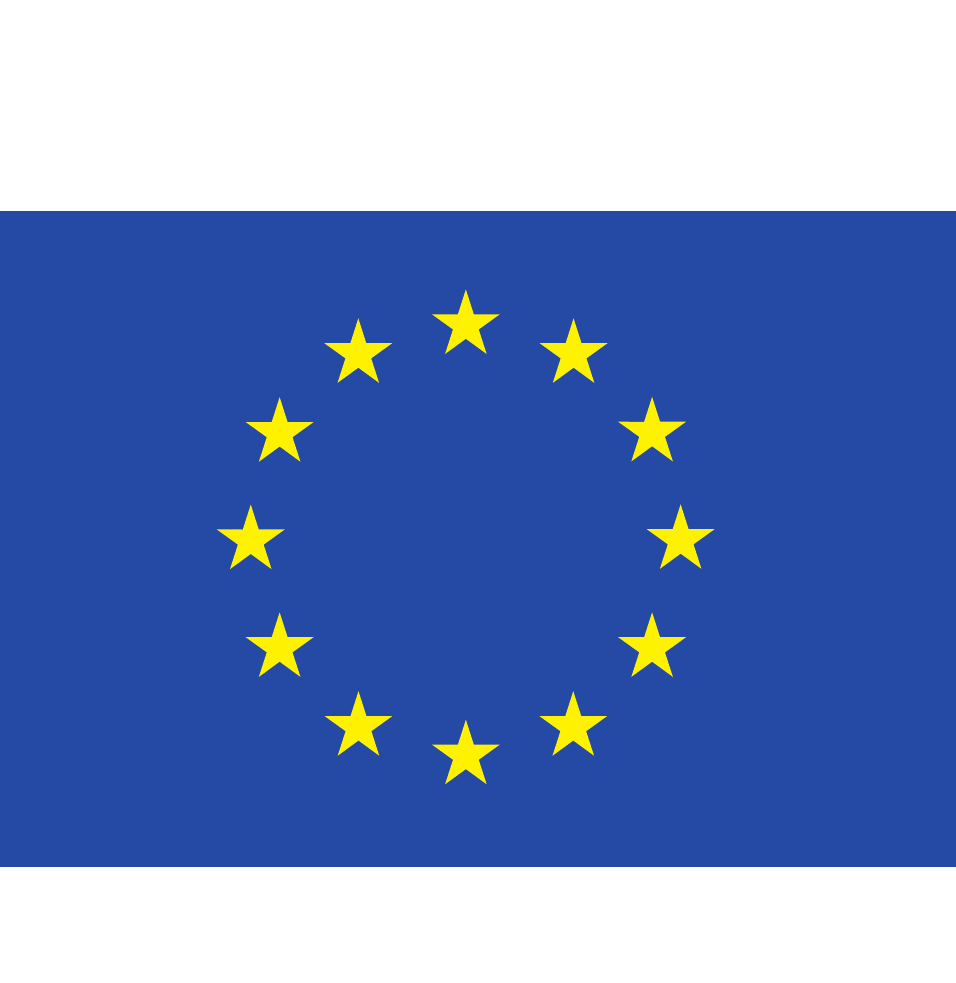} \end{minipage}  \hspace{-2cm} \begin{minipage}[l][1cm]{0.7\textwidth}This work has received funding from the European Union's Horizon 2020 research and innovation programme under the Marie Sk\l{}odowska-Curie grant agreement No 734922.
\end{minipage}}
\vspace{-0.9cm}

\begin{abstract}
We consider a natural notion of search trees on graphs, which we show is ubiquitous in various areas of discrete mathematics and computer science.
Search trees on graphs can be modified by local operations called rotations, which generalize rotations in binary search trees.
The rotation graph of search trees on a graph $G$ is the skeleton of a polytope called the {\em graph associahedron} of $G$.

We consider the case where the graph $G$ is a tree.
We construct a family of trees $G$ on $n$ vertices and pairs of search trees on $G$ such that the minimum number of rotations required to transform one search tree into the other is $\Omega (n\log n)$. This implies that the worst-case diameter of tree associahedra is $\Theta (n\log n)$, which
answers a question from Thibault Manneville and Vincent Pilaud.
The proof relies on a notion of projection of a search tree which may be of independent interest. 
\end{abstract}

\section{Introduction}

Rotations in binary trees are simple local operations that exchange the levels of two nodes of the tree, and allow the transformation of any tree into any other tree. Using the classical bijection between binary trees and triangulations of a convex polygon, we can map rotations to {\em flips} in such triangulations, which consist of replacing an edge shared by two triangles by the other diagonal of the quadrilateral that they form. 
These operations give rise to {\em rotation graphs} on binary trees, or {\em flip graphs} on triangulations. In those graphs, the set of vertices is the set of trees (respectively, triangulations), and two of them are adjacent if and only if they differ by a single rotation (flip). These graphs are known to be skeletons of {\em associahedra}, or {\em Stasheff polytopes}, which appear in many contexts in discrete geometry and algebraic topology~\cite{T51,S63,L89,CSZ15}. An illustration is given in Figure~\ref{fig:graph}.

A well-known challenging question, to which a final answer has only been given recently, is that of the {\em diameter} of associahedra: the largest number of rotations needed to transform a binary tree into another~\cite{STT86,D10,P14}. In 1986, Sleator, Tarjan, and Thurston~\cite{STT86} established a tight bound of $2n-4$ on the diameter of the $n$-dimensional associahedron, for large enough values of $n$, using notions from hyperbolic geometry. In 2014, Pournin~\cite{P14} completely settled the question of the exact value of the diameter with a purely combinatorial method, proving that it was equal to $2n-4$ for all $n\geq 9$.

\begin{figure}
\includegraphics[page=11,scale=.7]{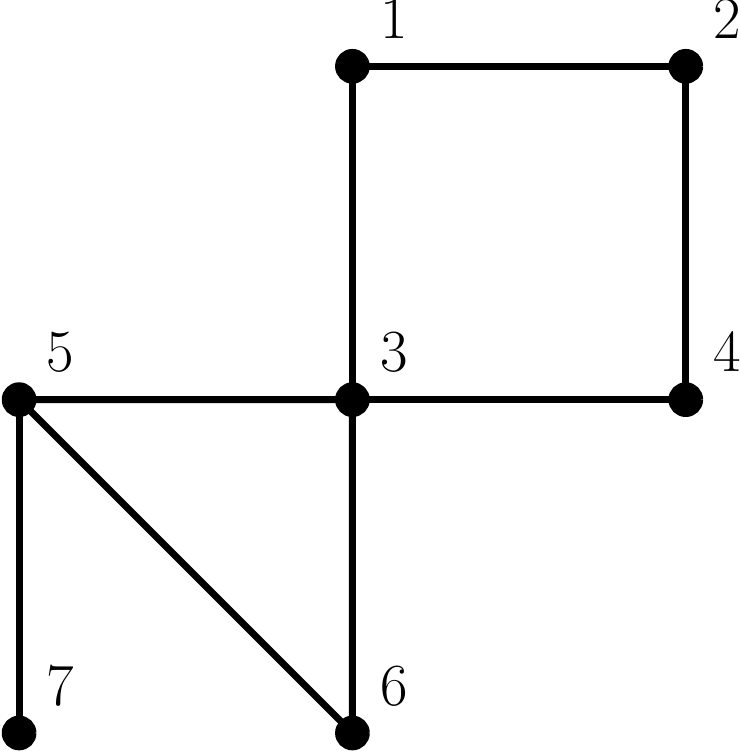}
\caption{\label{fig:graph}Rotations in binary trees and flips in triangulations.}
\end{figure}

Carr and Devadoss~\cite{CD06,D09} introduced {\em graph associahedra}, polytopes associated with graphs, that generalize associahedra, with a rich combinatorics related to Coxeter complexes and moduli spaces of curves. The usual associahedra are graph associahedra of paths, permutohedra are graph associahedra of complete graphs, and cyclohedra are graph associahedra of cycles.
The diameter of cylohedra was recently studied by Pournin~\cite{P17}.
Graph properties of the skeletons of graph associahedra were investigated by Manneville and Pilaud~\cite{MP15}. They proved in particular that the diameter of the graph associahedron of a graph $G$ is at least the number of edges of $G$, and at most quadratic in the number of vertices. They asked the following question: Does there exist a family of trees on $n$ vertices such that the diameter of their associahedra is $\Omega (n\log n)$?

Our contribution is an affirmative answer to this question. 

In Section~\ref{sec:def}, we give the definition of a {\em search tree} on a given graph $G$, which we show to be essentially equivalent to several other structures studied in various contexts. We also define the rotation operations in those trees, yielding a rotation graph that is the skeleton of a graph associahedron. We then restrict our attention to the case where $G$ is a tree and state the main result in Section~\ref{sec:statement}. The proof of our lower bound on the diameter of tree associahedra is given in Section~\ref{sec:proof}.

\section{Definitions}
\label{sec:def}

\subsection{Search trees}

\begin{figure}
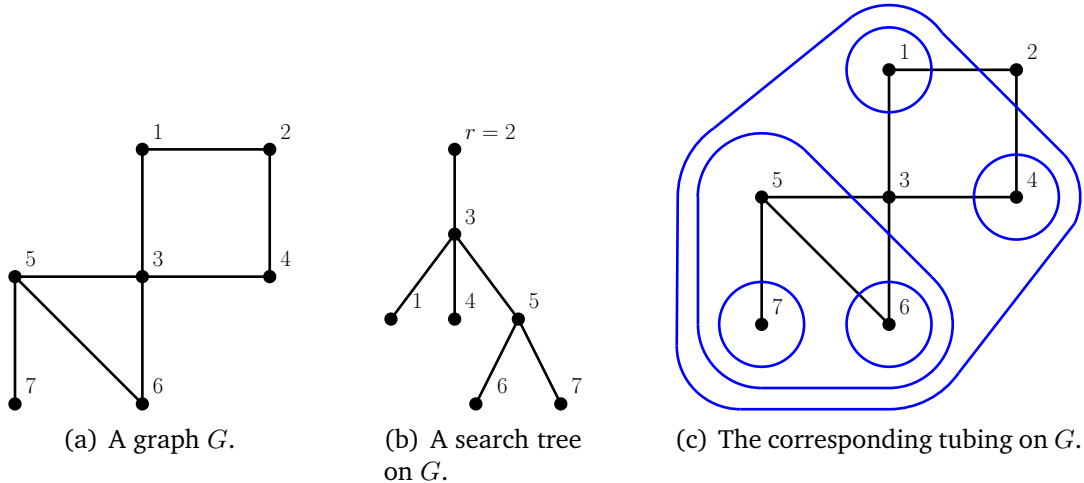

\subfigure[A graph $G$.]{\includegraphics[page=1,scale=.5]{figures.pdf}}
\hspace{1cm}
\subfigure[A search tree on $G$.]{\includegraphics[page=2,scale=.5]{figures.pdf}}
\hspace{1cm}
\subfigure[\label{fig:tubing}The corresponding tubing on $G$.]{\includegraphics[page=6,scale=.5]{figures.pdf}}
\caption{\label{fig:tree}An example of search tree.}
\end{figure}

In what follows, all graphs are simple and undirected. We consider {\em search trees} on graphs, defined recursively as follows.

\begin{definition}
Let $G=(V,E)$ be a connected graph. A {\em search tree} on $G$ is a rooted tree $T$ with vertex set $V$ and root $r\in V$.
If $|V|=1$, then $T$ is the single vertex $r$.
Otherwise, $G-r$ is composed of some $k\geq 1$ connected components.
Then $T$ is composed of the root $r$, connected to the roots of $k$ search trees, one for each of the components.
\end{definition}

An illustration of this definition is given in Figure~\ref{fig:tree}.
We also make use of the standard terminology on rooted trees. For each vertex $v\in V$, the first neighbor of $v$ on the path from $v$ to $r$ in $T$ is called the {\em parent} of $v$. The other neighbors of $v$ are called the {\em children} of $v$. The {\em subtree} rooted at $v$ is the tree formed by $v$ together with the subtrees of its children.

If $G$ is a path on $n$ vertices, then the search trees on $G$ are exactly the binary search trees on $n$ elements. If $G$ is a complete graph on $n$ vertices, then search trees on $G$ are in one-to-one correspondence with permutations of the vertices of $V$.

\subsection{Related structures}

Search trees on graphs as defined above are found in many disguises in the literature. 

In polyhedral combinatorics, search trees are essentially equivalent to inclusionwise maximal {\em tubings}, as introduced by Carr and Devadoss~\cite{CD06}. In short, tubings on a graph $G$ are collections of connected subgraphs of $G$, called {\em tubes}, such that every pair of tubes is either (i) nested or (ii) nonadjacent, that is, disjoint and such that their union is not connected. 
We obtain a one-to-one correspondence between search trees and inclusionwise maximal tubings by letting the tubes be induced by the set of vertices contained in the subtrees of a search tree. An example is given in Figure~\ref{fig:tubing}. Search trees are referred to as {\em spines} by Manneville and Pilaud~\cite{MP15}. Tubings on $G$ are faces of the graph associahedron of $G$. The vertices of the graph associahedron of $G$ are inclusionwise maximal tubings. The edges of the graph associahedron connect pairs of maximal tubings that differ by exactly two tubes, or equivalently pairs of search trees that differ by a single rotation, as we describe below. An independent description of graph associahedra based on a related notion of {\em graphical building sets} was given by Postnikov~\cite{P09}. Geometric realizations of graph associahedra have been described by Devadoss~\cite{D09}.

In combinatorial optimization, search trees can be identified under the terminology of {\em vertex rankings}~\cite{BDJKKMT98}. 
Vertex rankings are colorings $c:V\to [k]$ such that for any pair of vertices $u,v\in V$ such that $c(u)=c(v)$, and for every path between $u$ and $v$, there exists a vertex $w$ on this path such that $c(w)>c(u)$.
The {\em vertex ranking chromatic number} of $G$ is the smallest $k$ such that $G$ admits a $k$-ranking. 
We observe that every $k$-ranking of a connected graph $G$ directly yields a search tree $T$ of height $k$ on $G$. 
Indeed, since $G$ is connected, there cannot be more than one vertex $r$ of color $k$. Pick $r$ as the root of $T$, and recurse on the connected components of $G-r$. Conversely, every search tree can be interpreted as a vertex ranking. Computing optimal vertex rankings of arbitrary graphs is NP-hard~\cite{P88}. However, they can be computed in linear time for trees~\cite{S89}, and in polynomial time for a number of other graph classes~\cite{D93,A94}. For arbitrary graphs, only an  polynomial-time $O(\log^2 n)$-approximation algorithm is known~\cite{BGHK95}.
An elegant connection between the performance ratio of online hitting set algorithms and vertex rankings has been established by Even and Smorodinsky~\cite{ES14}.

In graph theory, the vertex ranking chromatic number is also known as the {\em tree-depth}. Connections with other classical structural parameters of graphs such as treewidth, pathwidth, and degeneracy, as well as the behavior of tree-depth with respect to minors and induced subgraphs have been studied extensively. We refer the reader to Ne\v{s}et\v{r}il and Ossona de Mendez~\cite{NO12} for a comprehensive survey. 


In algorithms, search trees have been studied from the data structure point of view. 
They provide a model of decision tree for search problems in which we wish to identify a hidden target vertex in a graph~\cite{LTT89,OP06,EKS16}. 
In this model, an oracle answers vertex queries: given a vertex $v$, it indicates the connected component of $G-v$ containing the target vertex. The height of the search tree is the worst-case query complexity of the search. A linear-time algorithm for constructing worst-case optimal search trees on trees has been rediscovered by Mozes, Onak, and Weimann~\cite{MOW08}.

\subsection{Rotations}

\begin{figure}
\includegraphics[page=3,scale=.5]{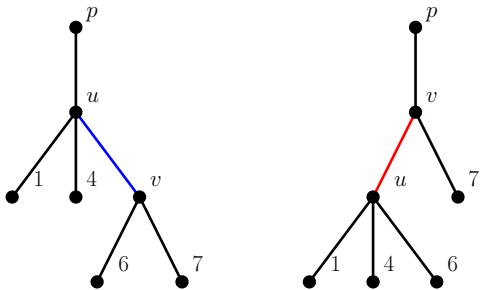}
\caption{\label{fig:rot}An example of rotation in the search tree of Figure~\ref{fig:tree}.}
\end{figure}

\begin{figure}
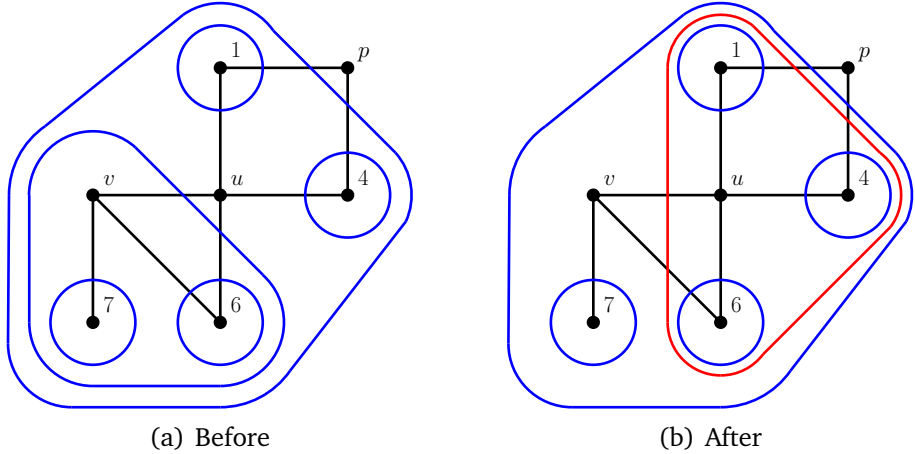

\subfigure[Before]{\includegraphics[page=12,scale=.5]{figures.pdf}}
\hspace{1cm}
\subfigure[After]{\includegraphics[page=7,scale=.5]{figures.pdf}}
\caption{\label{fig:rottubes}A flip in a tubing corresponding to the rotation in Figure~\ref{fig:rot}.}
\end{figure}

Consider two vertices $u,v$ of a search tree $T$ in a connected graph $G$ such that $v$ is a child of $u$. Let $p$ be the parent of $u$ in $T$, and let $\bar{V}$ be the set of vertices of the subtree of $T$ rooted at $u$.
We can obtain another tree $T'$ from $T$ by a {\em rotation on $u$ and $v$} as follows:

\begin{itemize}
\item Make $u$ a child of $v$, and make $v$ a child of $p$.
\item Every remaining subtree $S$ of $u$ in $T$ remains a subtree of $u$ in $T'$.
\item For every subtree $S$ of $v$ in $T$, 
if $u$ is adjacent to a vertex of $S$ in $G$, 
then make $S$ a subtree of $u$ in $T'$; otherwise, $S$ remains a subtree of $v$ in $T'$.  
\end{itemize}

An example is given on Figure~\ref{fig:rot}.
Rotations in search trees are equivalent to {\em flips} in maximal tubings~\cite{MP15}.
Two maximal tubings are connected by a flip if and only if they differ by exactly two tubes, see Figure~\ref{fig:rottubes}.
The {\em rotation graph} $\mathcal{R} (G)$ of $G$ has the set of search trees on $G$ as vertex set, and is such that two search trees are
adjacent if and only if they differ by a single rotation. The rotation graph of $G$ is the skeleton of the graph associahedron of $G$.
The distance between two search trees in $\mathcal{R} (G)$ is referred to as the {\em rotation distance}. 
The diameter $\delta (\mathcal{R} (G))$ of $\mathcal{R} (G)$ is the largest rotation distance between two search trees on $G$.

\section{Diameter of tree associahedra}
\label{sec:statement}

Manneville and Pilaud~\cite{MP15} showed that $\delta (\mathcal{R} (G))$ is at least the number of edges of $G$.
They also mention the following upper bound, for which we give a short proof. 

\begin{lemma}
\label{lem:ub}
Let $G$ be a tree on $n$ vertices. Then $\delta (\mathcal{R} (G))=O(n\log n)$. 
\end{lemma}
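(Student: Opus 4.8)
The plan is to prove the $O(n\log n)$ upper bound by a divide-and-conquer strategy based on the classical fact that every tree on $n$ vertices has a \emph{centroid}: a vertex $c$ such that every connected component of $G-c$ has at most $n/2$ vertices. The rough idea is that to transform a search tree $T$ into a search tree $T'$, it suffices to first bring both of them into a normal form in which $c$ is the root, then recurse independently on each of the (at most $n/2$-sized) components of $G-c$.

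First I would show that for any search tree $T$ on a tree $G$ and any target vertex $r$, one can reach the search tree in which $r$ is the root using at most $O(n)$ rotations. This is the key lemma. One clean way: observe that a single rotation on $u$ and $v$ where $v$ is a child of $u$ never increases the depth of $v$ and decreases it by one; iterating, we can "rotate $r$ up to the root" by repeatedly performing a rotation on $(\mathrm{parent}(r), r)$. Each such rotation strictly decreases the depth of $r$ by $1$, so after at most $\mathrm{depth}_T(r) \le n-1$ rotations $r$ becomes the root. (I should double-check against the rotation definition that rotating on $(p,r)$ with $r$ a child of $p$ is always a legal move and indeed decreases $r$'s depth — this follows directly since the operation makes $r$ a child of $p$'s parent.) Hence any two search trees on $G$ are at distance $O(n)$ from their respective "$r$-rooted" versions.

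Next, for the recursion: fix $G$ a tree on $n$ vertices and let $c$ be a centroid. Using the lemma above, from any search tree $T$ we reach in $O(n)$ rotations a search tree $T_c$ with root $c$; in such a tree, the children of $c$ are exactly the roots of search trees on the connected components $G_1,\dots,G_k$ of $G-c$, and rotations internal to a subtree rooted at a child of $c$ never involve $c$ and act entirely within that component. So if $D(n)$ denotes the worst-case rotation distance on a tree of $n$ vertices, we get the recurrence $D(n) \le 2\cdot O(n) + \sum_{i=1}^k D(n_i)$ where $\sum n_i = n-1$ and each $n_i \le n/2$. Unrolling: at each level of recursion the total "overhead" across all subproblems is $O(n)$, and the centroid property guarantees the recursion depth is $O(\log n)$, giving $D(n) = O(n\log n)$. (One subtlety to handle carefully: the components $G_i$ are themselves trees, so the recursion is well-typed; and one must note that the total vertex count strictly decreases, so the $O(\log n)$ depth bound is legitimate even though the individual $n_i$ need not all halve.)

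The main obstacle I expect is verifying the "normal form" step rigorously against the precise rotation rule given in the paper — specifically, confirming that after rotating $r$ to the root, the subtrees hanging off $r$ are exactly (search trees on) the components of $G-r$, so that the subsequent recursion is genuinely confined to those components and the rotation counts are additive. This amounts to checking that a search tree with root $r$ necessarily has $r$'s children being roots of search trees on the components of $G-r$, which is immediate from the definition of search tree, together with checking that internal rotations preserve this block structure; both are routine but need to be stated. Everything else is the standard centroid-decomposition analysis.
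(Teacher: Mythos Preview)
Your proposal is correct and is essentially the paper's own proof: both use Jordan's centroid theorem, rotate the centroid to the root in at most $n-1$ rotations, and recurse on the $\le n/2$-sized components of $G-c$ to obtain the $O(n\log n)$ bound. The only cosmetic difference is that the paper phrases it as transforming every search tree into the fixed centroid decomposition (and then using the triangle inequality), whereas you phrase it as a direct recurrence $D(n)\le 2\cdot O(n)+\sum_i D(n_i)$; the underlying argument and the arithmetic are identical.
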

\begin{proof}
A classical result due to Camille Jordan~\cite{J69} states that every tree has a vertex whose removal partitions the tree into connected
components of size at most $n/2$ each. Iteratively picking such a vertex as the root of the search tree yields the so-called 
{\em centroid decomposition} of the tree, which has height $O(\log n)$. Every tree can be turned into the search tree implementing
the centroid decomposition by first rotating the root up using at most $n-1$ rotations, then recursing on the subtrees. 
This yields an overall number of rotations in $O(n\log n)$. The centroid decomposition can in turn be transformed into any other tree
by applying $O(n\log n)$ rotations again.
\end{proof}

Manneville and Pilaud~\cite{MP15} posed the question of whether there exists a family of trees on $n$ vertices such that the diameter of their rotation graph is $\Omega (n\log n)$.
The question also appears in Ceballos et al.~\cite{CMPP15}.
We answer this question in the affirmative. We also confirm their conjecture that this lower bound is attained when $G$ is a complete binary tree. Together with Lemma~\ref{lem:ub}, it yields the following result.
\begin{theorem}
The maximum, over all trees $G$ on $n$ vertices, of $\delta(\mathcal{R}(G))$ is $\Theta (n\log n)$. 
\end{theorem}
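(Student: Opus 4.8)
The upper bound is already provided by Lemma~\ref{lem:ub}, so the entire content of the theorem lies in establishing a matching lower bound: there must exist a family of trees $G$ on $n$ vertices and pairs of search trees on $G$ with rotation distance $\Omega(n\log n)$. Following the abstract, I would take $G$ to be the complete binary tree on $n$ vertices (so of height $\Theta(\log n)$) and exhibit two specific search trees $T_1, T_2$ on $G$ that are far apart. The natural candidates are two ``rerootings'' of $G$ that are extremal in opposite ways --- for instance, $G$ itself rooted at its natural root, and $G$ rerooted at a leaf (or at a vertex chosen so that the two induced search-tree structures disagree as much as possible). The goal is to show every rotation sequence between them is long.

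The core of the argument should be a potential/progress function that changes by at most $O(1)$ per rotation but must change by $\Omega(n\log n)$ in total. The excerpt flags that the proof ``relies on a notion of projection of a search tree,'' so the plan is: for each edge $e$ of $G$ (or each vertex, or each subtree), define a projection of the current search tree $T$ onto the substructure of $G$ associated to $e$, producing a small local configuration. A single rotation on $(u,v)$ in $T$ only disturbs the subtree rooted at $u$ and touches edges of $G$ incident to a bounded neighborhood, so it can change only a bounded number of these projected configurations, and each by a bounded amount. Then I would argue that in $T_1$ versus $T_2$, a constant fraction of the projections --- spread across all $\Theta(\log n)$ levels of the tree, with $\Theta(n/2^i)$ of them at level $i$ --- are in ``incompatible'' states, so summing $\sum_i 2^i \cdot \Theta(n/2^i) = \Theta(n\log n)$... wait, more carefully: one wants $\sum_{i} (\text{number of projections at scale } i)\times(\text{cost to fix each}) $ to telescope to $\Omega(n\log n)$, which happens if at each of the $\log n$ scales there are $\Omega(n)$-worth of disagreements, e.g. because a long-distance ancestor relation in $G$ forces a chain of rotations.

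Concretely I expect the mechanism to be: an ancestor--descendant relationship between two vertices $x,y$ in $T$ at distance $d$ in $G$ essentially forces the path between them in $G$ to be ``processed'' in a consistent order, and reversing this order (as happens between $T_1$ and $T_2$ for many pairs simultaneously) costs $\Omega(d)$ rotations that cannot be shared across different pairs. Choosing $T_1,T_2$ so that there are $\Omega(n)$ such pairs at distance $\Omega(\log n)$, with the work provably disjoint, yields the bound. The main obstacle --- and the reason a clean invariant is needed --- is proving the \emph{disjointness/charging}: a single rotation is cheap and local, but one must rule out that one rotation makes simultaneous progress on many of the $\Omega(n)$ witness pairs. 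This is exactly what the projection notion is designed to control: by showing each projection is affected $O(1)$ times per rotation and that $\Omega(n\log n)$ units of projection-change are required, the lower bound follows, and combined with Lemma~\ref{lem:ub} we get $\delta(\mathcal{R}(G)) = \Theta(n\log n)$ for the complete binary tree, hence the stated maximum over all trees on $n$ vertices is $\Theta(n\log n)$.
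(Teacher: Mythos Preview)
Your high-level picture is right: the upper bound is Lemma~\ref{lem:ub}, the witness graph $G$ is a complete binary tree, and one of the two search trees is $G$ itself with its natural root. But two concrete ingredients are missing, and without them the argument does not go through.

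First, the choice of the second search tree. A mere rerooting of $G$ at a leaf (or any ``extremal'' rerooting) is not enough: two rerootings of the same tree differ only along the path between the two roots, which in a complete binary tree has length $O(\log n)$, and the rotation distance between them is $O(n)$, not $\Omega(n\log n)$. The paper's $T'_k$ is much more deliberate: the $2^{k-1}$ leaves of $G_k$ are lined up one above the other in the order given by the \emph{bit-reversal permutation} $\sigma_k$, with the rest of $G_k$ hanging below. The point of bit-reversal is that restricting it to the even (resp.\ odd) positions reproduces $\sigma_{k-1}$; this is exactly what makes the recursion work.

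Second, projection is not used as a sum of local per-edge potentials but as a recursion device. Letting $A$ and $B$ be the two halves of $G_k$ below the root, the Projection Lemma says that discarding all rotations touching a vertex outside $A$ yields a valid rotation sequence transforming $T_k|A$ into $T'_k|A$. Thanks to bit-reversal, the pair $(T_k|A,\,T'_k|A)$ is isomorphic to $(T_{k-1},T'_{k-1})$, so the number of $AA$-rotations is at least $f(k-1)$; likewise for $B$. This is the ``splitting'' half of the argument. The ``merging'' half bounds the $AB$-rotations (those mixing a vertex of $A$ with one of $B$) by a separate, simple potential: the \emph{alternation number} of a search tree, namely the maximum number of $A$/$B$ alternations along any root-to-leaf path. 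It is $0$ in $T_k$ and $2^{k-1}-1$ in $T'_k$ (again because of bit-reversal), and a single rotation changes it by at most two, forcing at least $2^{k-2}$ $AB$-rotations. Since $AA$, $BB$, and $AB$ rotations are disjoint, one gets $f(k)\ge 2f(k-1)+2^{k-2}$, hence $f(k)=\Omega(k2^k)=\Omega(n\log n)$.

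Your proposed charging over $\Omega(n)$ ancestor--descendant pairs at distance $\Omega(\log n)$ runs into precisely the obstacle you flag: one rotation can swap the relative order of many such pairs simultaneously, and there is no obvious amortization without additional structure. The paper sidesteps this entirely by the recursion-plus-alternation scheme above; the bit-reversal permutation is the missing idea that makes both halves of that scheme fire.
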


\section{Proof}
\label{sec:proof}

\subsection{Preliminaries}

The proof of our lower bound uses induction on the number of vertices.
In order for this to be possible, we need to be able to {\em project} search trees, and rotation sequences on these search trees, on a subgraph of $G$. We show that there is a natural way to achieve this whenever $G$ is a tree and the subgraph is connected.

\begin{figure}
\includegraphics[page=8,scale=.5]{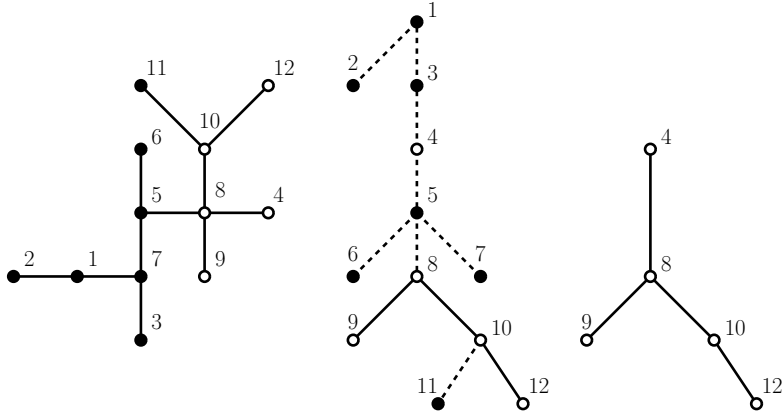}
\caption{\label{fig:proj}A tree $G$, a search tree $T$ on $G$, and its projection $T|S$ on $S=\{4,8,9,10,12\}$.} 
\end{figure}

As a preliminary step, let $x$ be a leaf (a degree-one vertex) of a tree $G=(V,E)$ with $|V|>1$, and consider a search tree $T$ on $G$.
Let us show how to construct a search tree on $G-x$ from $T$ that can be interpreted as the projection of $T$ on $V\setminus \{x\}$. 
In the search tree $T$, the vertex $x$ can be of three different types:
\begin{enumerate}
\item $x$ has a parent but no child,
\item $x$ has a parent and a single child,
\item $x$ is the root of $T$.
\end{enumerate}
In the first case, $x$ is also a leaf of $T$, and we can simply remove $x$ from $T$. In the second case, we can replace the two edges between $x$ and its parent and $x$ and its child by a single edge between the parent and the child. In the third case we remove $x$ from $T$ and choose its child as the new root. The tree that is obtained after this operation is a search tree on $G-x$. We refer to this operation as {\em pruning} $x$ from $T$.

A search tree on any connected subgraph $G[S]$ can be constructed by iteratively pruning a leaf of $G$ from $T$. We observe that given two leaves $x,y$ of $G$, we can prune them in any order and get the same search tree on $G-x-y$. The sequence in which the leaves are pruned is called a {\em vertex shelling order}. It is known that vertex shelling orders form the basic words of an antimatroid~\cite{KLS91,BZ92}, which are connected by adjacent transpositions. Hence the final tree that is obtained after a sequence of pruning only depends on the set of leaves that are pruned, and not on the specific order in which they are pruned.
This leads to the following inductive definition, illustrated in Figure~\ref{fig:proj}.

\begin{definition}[Projection of a search tree]
Consider a tree $G=(V,E)$ and a search tree $T$ on $G$.
Let $S\subseteq V$ be such that $G[S]$ is connected. 
The {\em projection} $T|S$ of $T$ on $S$ is the search tree on $G[S]$
obtained as follows:
\begin{itemize}
\item If $|V\setminus S|=1$ then $T|S$ is the result of pruning $x$ from $T$, where $x$ is the unique element of $V\setminus S$.
\item Otherwise, let $x$ be any vertex of $V\setminus S$ adjacent to a vertex of $S$ in $G$ and let $S'=S\cup\{x\}$. Then $T|S$ is the result of pruning $x$ from $T|S'$.
\end{itemize}
\end{definition}

We also observe the following property, which permits a more straightforward interpretation of the projection operation.

\begin{lemma}
Consider a tree $G=(V,E)$ and a search tree $T$ on $G$.
Let $S\subseteq V$ be such that $G[S]$ is connected. 
The projection $T|S$ of $T$ on $S$ is obtained by applying the following operation on each connected component $C$ of $T[V\setminus S]$:
\begin{itemize}
\item if two vertices of $T$ have a neighbor in $C$, then replace $C$ by a single edge,
\item otherwise, only one vertex $v$ of $T$ has a neighbor in $C$; delete $C$. If $C$ contains the root of $T$, choose vertex $v$ as the root of $T|S$.
\end{itemize}
\end{lemma}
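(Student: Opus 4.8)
The plan is to prove the lemma by induction on $|V\setminus S|$, peeling off one leaf of the current graph at a time and tracking how a single connected component of $T[V\setminus S]$ is affected. Before setting up the induction I would record two structural facts. First, the vertex set of any subtree of a search tree on $G$ induces a connected subgraph of $G$ — immediate from the recursive definition of search trees — and hence, $G$ being a tree, a \emph{convex} subgraph (one containing the $G$-path between any two of its vertices). Second, and crucially: every component $C$ of $T[V\setminus S]$ has either one or two neighbors in $T$ lying outside $C$, so the two bullets of the statement are exhaustive and exclusive. Indeed, the topmost vertex $t$ of $C$ is the unique vertex of $C$ whose $T$-parent lies outside $C$, and that parent, when it exists, lies in $S$: this is the only possible ``upward'' outside-neighbor. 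Any other outside-neighbor is a $T$-child $w\in S$ of some $u\in C$; if there were two such child-edges, from $u_1,u_2\in C$ to $w_1,w_2\in S$, then with $u$ the lowest common ancestor of $u_1$ and $u_2$ in $T$ (which lies in $C$, since $C$ is $T$-connected) the vertices $w_1,w_2$ sit in distinct components of $G[W]-u$, where $W$ is the vertex set of the subtree of $T$ rooted at $u$; by convexity the $G$-path from $w_1$ to $w_2$ then runs through $u\notin S$, contradicting connectivity of $G[S]$. Hence there is at most one ``downward'' outside-neighbor, at most two in all, and the two-neighbor case forces $t$ to have a parent, i.e.\ forces $C$ not to contain the root of $T$ — which removes the only apparent ambiguity in the root clause.

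For the induction, recall the fact established earlier in the paper: $T|S$ is obtained from $T$ by repeatedly pruning a leaf of the current graph lying outside $S$ until only $S$ remains, and by the order-independence of pruning the result is well-defined; such a leaf always exists while vertices of $V\setminus S$ remain, since each component of the current graph minus $S$ is a subtree joined to $G[S]$ by a single edge and so contains a vertex that is a leaf of the current graph. It therefore suffices to prove the one-step statement: \emph{if $x$ is a leaf of $G$ with $x\notin S$ and $\hat T$ is the result of pruning $x$ from $T$, then applying the component operation to $T$ with respect to $S$ in $G$ yields the same tree as applying it to $\hat T$ with respect to $S$ in $G-x$.} Iterating this down a valid pruning order turns the component operation on $T$ into the component operation on $T|S$, which is vacuous, so the result is $T|S$ itself.

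To prove the one-step statement, observe that a leaf $x$ of $G$ has at most one $T$-child, so $x$ falls into exactly one of the three pruning types (leaf of $T$; one parent and one child; root of $T$). In each type, pruning $x$ alters only the component $C$ of $T[V\setminus S]$ containing $x$: every other component, together with its set of outside-neighbors and its incidence with the root, is untouched, so the component operation edits it identically in $T$ and in $\hat T$. If $C=\{x\}$, the three pruning types are respectively: $C$ has one outside-neighbor and does not contain the root (delete $C$); $C$ has two outside-neighbors (replace $C$ by the joining edge); $C$ has one outside-neighbor and contains the root (delete $C$ and reroot there) — these match the component operation on $C$ verbatim, and in $\hat T$ the component $C$ is already gone. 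If $|C|\ge 2$, the unique $T$-neighbor of $x$ lying in $C$ absorbs the edge-shortcut, or the rerooting, performed by the pruning of $x$, so that $C\setminus\{x\}$ is again a single component of $\hat T[(V\setminus S)\setminus\{x\}]$ with exactly the same set of outside-neighbors, and the same incidence with the root, as $C$ had in $T$; hence the component operation applied to $C\setminus\{x\}$ in $\hat T$ deletes — or re-edges — the same part of $G[S]$ as it did to $C$ in $T$, and the two resulting trees coincide. The main obstacle is precisely this last verification: checking edge by edge, across the handful of sub-cases indexed by the type of $x$ and by which of $x$'s $T$-neighbors lie in $C$ versus in $S$, that ``prune $x$, then delete/contract $C\setminus\{x\}$'' produces literally the same tree as ``delete/contract $C$''. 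Each sub-case is routine, the only delicate point being the consistent application of the root-reassignment clause — which is exactly why one needs the structural fact that a component has two outside-neighbors only when it avoids the root of $T$.
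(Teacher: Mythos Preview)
Your proof is correct, but it takes a different route from the paper's. The paper argues \emph{backward}: starting from $T|S$, it observes that each component $C$ of $T[V\setminus S]$ can be rebuilt by a sequence of inverse pruning steps (add a leaf, subdivide an edge, or graft a new root), and that no such sequence can give $C$ more than two outside-neighbors; the two cases of the statement then fall out directly from whether the reconstruction used an edge-subdivision or not. Your argument, by contrast, runs \emph{forward}: you first prove the ``at most two outside-neighbors'' fact independently via convexity of subtree vertex sets in $G$ together with connectivity of $G[S]$, and then induct on $|V\setminus S|$, checking case by case that pruning a single leaf of $G$ commutes with the component operation. Your version is substantially more detailed and makes the root-reassignment clause and the exhaustiveness of the two bullets fully explicit --- points the paper's short proof leaves to the reader --- at the cost of a longer case analysis. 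The convexity argument for the two-neighbor bound is a nice self-contained alternative to the paper's inverse-pruning justification.
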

\begin{proof}
Any connected component $C$ of $T[V\setminus S]$ can be constructed, starting from the projection $T|S$, by iteratively performing the inverse operations of those in the pruning, that is, either (1) adding a leaf, (2) splitting a single edge into two edges, or (3) adding a new root node with a single child. No sequence of such operations can increase the number of vertices of $V\setminus S$ that have a neighbor in $C$ above two. If the number of such vertices is equal to two, then the net effect of the pruning steps is to contract this component into a single edge. Otherwise, the component is simply removed.  
\end{proof}

We now consider rotation sequences, that is, sequences of rotation operations that can be applied starting with an initial tree $T$. 

\begin{lemma}[Projection Lemma]
Consider a tree $G=(V,E)$, two search trees $T$ and $T'$ on $G$, and a rotation sequence $\pi$ that transforms $T$ into $T'$.
Let $S\subseteq V$ be such that $G[S]$ is connected. 
Let the {\em projection} $\pi|S$ of $\pi$ on $S$ be obtained from $\pi$ by deleting every rotation on a pair $u,v$ such that at least one of $u,v$ lies in $V\setminus S$. 
Then $\pi|S$ is a rotation sequence that transforms $T|S$ into $T'|S$.
\end{lemma}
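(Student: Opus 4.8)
The plan is to proceed by a double induction. First I would reduce to the case $S = V \setminus \{x\}$ for a single leaf $x$ of $G$. Since $G[S]$ is connected, the vertices of $V\setminus S$ can be ordered $x_1,\dots,x_m$ so that each $x_i$ is a leaf of the tree $G_i := G[V\setminus\{x_1,\dots,x_{i-1}\}]$: root $G_i$ at a vertex of $S$ and take $x_i$ to be a vertex of $V\setminus S$ at maximum depth, so that it has no children and hence has degree one. By the definition of projection and the order-independence of pruning already recorded above, $T|S$ (resp.\ $T'|S$) is obtained by successively pruning $x_1,\dots,x_m$, and $\pi|S$ is obtained by successively deleting from $\pi$ the rotations touching $x_1$, then those touching $x_2$, and so on. Hence it suffices to prove the lemma when $|V\setminus S|=1$, and then apply it with $(G,x)=(G_i,x_i)$ for $i=1,\dots,m$, composing the results.

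For the single-leaf case, write $S = V\setminus\{x\}$, so $T|S$ is the result of pruning $x$ from $T$. I would then induct on the length of $\pi$. Writing $\pi = \rho\cdot\pi'$ with $\rho$ the first rotation, on a pair $(u,v)$ with $v$ a child of $u$, transforming $T$ into $T_1$, it is enough to establish the following single-step claim. \textbf{(i)} If $x\notin\{u,v\}$, then $v$ is still a child of $u$ in $T|S$ and the rotation on $(u,v)$ transforms $T|S$ into $T_1|S$; in this case $\pi|S = \rho\cdot(\pi'|S)$ and we conclude by the induction hypothesis applied to $\pi'$. \textbf{(ii)} If $x\in\{u,v\}$, then $T|S = T_1|S$; in this case $\pi|S = \pi'|S$ and we again conclude by the induction hypothesis.

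To prove the single-step claim I would use the explicit description of pruning, together with the observation that a leaf of $G$ has at most one child in any search tree on $G$ (removing a degree-one vertex from a connected graph leaves it connected): pruning $x$ either deletes $x$ (when $x$ is a leaf of $T$, or the root, its unique child then becoming the new root) or bypasses $x$ by merging its parent edge and child edge. For part (i), pruning $x\ne u,v$ touches only edges incident to $x$, so the parent--child pair $(u,v)$ is preserved; one then checks case by case -- $x$ outside the subtree $\bar V$ rooted at $u$ and distinct from the parent $p$ of $u$ (immediate); $x=p$; $x$ a descendant of $u$ lying outside the subtree of $v$; $x$ inside a subtree of $v$ -- that pruning $x$ and the rotation on $(u,v)$ commute. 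For part (ii) I would exploit that $x$ has a single $G$-neighbour $y$: this forces, for instance, that if $x=u$ then $v$ is the \emph{only} child of $x$, and that $y$ lies in the subtree of $T$ rooted at $v$; a short case analysis on whether $x=u$ or $x=v$, on the type of $x$ in $T$, and on whether the unique subtree of $v$ containing $y$ migrates to $u$ then shows that the effect of $\rho$ is exactly undone by the subsequent pruning of $x$.

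I expect the main obstacle to be a single recurring subtlety, present in part (i) and implicitly in part (ii): the rotation on $(u,v)$ decides which subtrees of $v$ become subtrees of $u$ using adjacency in $G$, while the corresponding rotation after pruning uses adjacency in $G-x$. One must show that a subtree $S_1$ of $v$ migrates to $u$ before pruning if and only if its image migrates after. The only way the two tests can disagree is if the unique $G$-edge from $u$ into $V(S_1)$ is the edge $ux$, which requires $u=y$; but then, since $x$ is a leaf, connectivity of the subgraph induced by $V(S_1)$ forces $S_1=\{x\}$, a degenerate situation handled directly. Reconciling these adjacency tests, together with the bookkeeping of which vertices end up as children of $u$ versus $v$, is the bulk of the argument.
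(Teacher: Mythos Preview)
Your approach is essentially the same as the paper's: both reduce by a double induction to the case of a single leaf $x$ and a single rotation, and then split into the two cases according to whether the rotation involves $x$. You are considerably more careful than the paper about the case analysis---in particular the adjacency-in-$G$ versus adjacency-in-$G-x$ issue, which the paper simply asserts away---but the skeleton of the argument is identical.
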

\begin{proof}
By induction, it is sufficient to prove the result for a single rotation, and only in the case where $S=V\setminus \{x\}$ for some leaf $x$ of $G$. 
Let $T'$ be the result of performing a rotation on $x$ in $T$.
Since $x$ is a leaf it must have at most one child in both $T$ and $T'$, and pruning it from one tree or the other yields the same search tree $T|S=T'|S$. 
Similarly, let $T'$ be the result of performing a rotation in $T$ that does not involve $x$. Then the same rotation transforms $T|S$ into $T'|S$.  
\end{proof}

Finally, our construction also relies on so-called {\em bit-reversal} permutations. 
These permutations play an important role in Fast Fourier Transform algorithms~\cite{K96}, but also in the analysis of performance of binary search trees~\cite{W89,DHIKP09}. We represent a permutation of $n$ elements by a sequence of integers in $\{0,1,\ldots ,n-1\}$, each appearing exactly once. Bit-reversal permutations get their name from the fact that they map each integer to the value obtained by reversing its binary representation.

\begin{definition}[Bit-reversal permutations]
The bit-reversal permutation of parameter one is $\sigma_1 = (0)$. The bit-reversal permutation $\sigma_k$ of parameter $k$ is defined by concatenating $2\sigma_{k-1}$ with $2\sigma_{k-1}+1$.
\end{definition}
In particular, we obtain:
\begin{eqnarray*}
\sigma_2 & = & (0,1)\\ 
\sigma_3 & = & (0,2,1,3)\\
\sigma_4 & = & (0,4,2,6,1,5,3,7)\\ 
\sigma_5 & = & (0,8,4,12,2,10,6,14,1,9,5,13,3,11,7,15).
\end{eqnarray*}

\subsection{Lower bound}

\begin{lemma}
\label{lem:main}
There exists a family of trees $\{ G_n \}$ on $n$ vertices such that $\delta(\mathcal {R}(G_n)) = \Omega (n\log n)$. 
\end{lemma}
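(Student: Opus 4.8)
The plan is to take $G_n$ to be the complete binary tree on $n = 2^h - 1$ vertices (for appropriate $h$), and to exhibit two search trees $T$ and $T'$ on $G_n$ whose rotation distance is $\Omega(n \log n)$. The natural candidate for one of them, say $T$, is the search tree that realizes the centroid decomposition of $G_n$ — i.e. the balanced search tree of height $h = \Theta(\log n)$ obtained by repeatedly choosing the root of (each subtree of) $G_n$ as the root of the search tree. For $T'$ I would take a search tree encoding a bit-reversal-type structure: intuitively, a search tree on $G_n$ that, when restricted to a suitable root-to-leaf path of length $h$ in $G_n$, looks like the "bit-reversal binary search tree" on $2^{h}$ keys, which is known to be far from balanced in an amortized sense. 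Since a root-to-leaf path in $G_n$ is an induced path, the Projection Lemma lets me push $T$, $T'$, and any rotation sequence between them down onto that path, where the problem becomes a statement about rotation distance between binary search trees — and there the bit-reversal permutation is the classical witness of large (amortized) cost.

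The key steps, in order, would be: (1) Fix $n = 2^h - 1$ and define $G_n$ as the complete binary tree; define $T$ as the centroid/balanced search tree. (2) For each leaf $\ell$ of $G_n$, let $P_\ell$ be the unique root-to-$\ell$ path in $G_n$; there are $2^{h-1}$ such leaves and the paths all share the root but are otherwise "spread out." Define $T'$ so that for every leaf $\ell$, the projection $T'|P_\ell$ is (isomorphic to) a binary search tree on $|P_\ell|$ elements exhibiting large rotation cost — concretely, so that transforming $T|P_\ell$ into $T'|P_\ell$ requires $\Omega(|P_\ell| \cdot \log |P_\ell|) = \Omega(\log n \cdot \log\log n)$ rotations... but that only gives $\Omega(n \log\log n)$ after summing, which is too weak. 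So instead I would reverse the roles of path and fan-out: use the Projection Lemma with $S$ ranging over many vertex-disjoint (or nearly disjoint) connected subgraphs, each a copy of a path of length $\Theta(\log n)$, arranged so that a rotation in $G_n$ can "charge" to at most $O(1)$ of these subpaths. Then the total rotation count between $T$ and $T'$ is at least the sum over the subpaths of the rotation distances of the projections, and with $\Theta(n/\log n)$ disjoint paths each contributing $\Omega(\log n \cdot \log \log n)$ we would still be short. The correct accounting, which I expect the paper to use, is: each of the $\Theta(\log n)$ "levels" of a long path contributes, via the bit-reversal lower bound, and summing a telescoping/recursive bound over the $\Theta(\log n)$ levels of the complete binary tree — each level having $\Theta(n)$ vertices total — yields $\Theta(n) \cdot \Theta(\log n)$.

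Concretely, the cleanest route is an induction on $h$ (equivalently on $n$), exactly as the "Preliminaries" section sets up. Write $G_n$ as a root $\rho$ with two subtrees $G_{(n-1)/2}$. Choose $T, T'$ so that (a) their restrictions to the two halves are the recursively-defined hard instances on $G_{(n-1)/2}$, so by induction each half already costs $\Omega\big(\tfrac{n}{2}\log \tfrac{n}{2}\big)$, and (b) additionally, $\rho$ together with one long root-to-leaf path forms an induced path $P$ of length $h$ on which $T|P$ is balanced but $T'|P$ is the bit-reversal BST, forcing an extra $\Omega(h \cdot 2^{?})$ ... — the extra additive term has to be $\Omega(n)$, not just $\Omega(\log n)$, for the recursion $R(n) \geq 2R(n/2) + \Omega(n)$ to solve to $R(n) = \Omega(n\log n)$. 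To get an extra $\Omega(n)$ I would not use a single path but rather the Projection Lemma applied to $\Theta(n/\log n)$ disjoint induced paths of length $\Theta(\log n)$ simultaneously, choosing $T'$ so each projection is a bit-reversal BST and hence costs $\Omega(\log n)$ (linear in the path length — this is the regime where bit-reversal is provably bad even non-amortized, with a logarithmic lower bound on a single transformation being automatic since distance $\geq$ number of structurally differing edges), while ensuring by the disjointness that each rotation of $G_n$ projects nontrivially onto $O(1)$ of the paths. That gives the $+\Omega(n)$ term and closes the induction. The main obstacle, and where I would spend the most care, is step (b): designing $T'$ so that simultaneously (i) its restriction to each half is the recursive hard instance, (ii) its restriction to each of the many short induced paths is genuinely the bit-reversal BST (so that the known lower bound applies — this requires that the bit-reversal labelling of path vertices be consistent with how $G_n$'s structure forces the projection to act), and (iii) the charging argument is valid, i.e. no single rotation is overcounted across the paths and across the two halves. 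Verifying that the projection of $T'$ onto a path really is forced to be the bit-reversal tree — rather than something the adversary rotation sequence can cheaply avoid — is the crux, and it is exactly what the Projection Lemma is there to guarantee: any rotation sequence on $G_n$ projects to a valid rotation sequence on each path, so its length is bounded below by each projected distance, and these projected distances are controlled by the classical bit-reversal analysis.
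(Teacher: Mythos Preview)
Your overall framework matches the paper's: complete binary tree $G_k$ on $n=2^k-1$ vertices, $T_k$ the centroid decomposition, $T'_k$ built from the bit-reversal permutation, and a recursion $f(k)\ge 2f(k-1)+\Omega(n)$ obtained by projecting onto the two halves $A$ and $B$ via the Projection Lemma. The gap is in how you obtain the additive $\Omega(n)$ term. Your plan is to project onto $\Theta(n/\log n)$ vertex-disjoint induced paths of length $\Theta(\log n)$ and sum the projected distances. But for this sum to combine with the recursive $2f(k-1)$ term without double counting, the rotations contributing to the path projections must be disjoint from the AA- and BB-rotations already counted in the recursion. A rotation survives projection onto a path $P$ only if both its endpoints lie in $P$; for such a rotation to be neither AA nor BB it must be an AB-rotation, which forces $P$ to contain vertices of both $A$ and $B$, and hence---since $G_k$ is a tree---to contain the root $r$. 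So all the paths you would need share $r$ and cannot be vertex-disjoint, collapsing your sum to a single $O(\log n)$ contribution. (Dropping the recursion and relying on the paths alone is also insufficient: the associahedron of a path of length $m$ has diameter $O(m)$, so $\Theta(n/\log n)$ disjoint paths contribute at most $O(n)$ total.)

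The paper's device for the $\Omega(n)$ term is different and avoids projecting onto paths entirely. It defines the \emph{alternation number} of a search tree $T$ as the maximum, over root-to-leaf paths of $T$, of the number of edges with one endpoint in $A$ and one in $B$. The specific $T'_k$ used---the leaves of $G_k$ listed in bit-reversal order as a chain at the top of the search tree, followed by the internal vertices---has alternation number $2^{k-1}-1$, because bit-reversal alternates perfectly between $A$-leaves and $B$-leaves; meanwhile $T_k$ has alternation number $0$. One then checks that an AB-rotation changes the alternation number by at most $2$ and that an AA- or BB-rotation cannot increase it. This forces at least $2^{k-2}$ AB-rotations, which are by definition disjoint from the AA- and BB-rotations counted recursively, yielding $f(k)\ge 2f(k-1)+2^{k-2}$. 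The missing idea in your proposal is precisely this potential-function argument isolating the AB-rotations; the many-disjoint-paths route, as you set it up, does not close.
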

\begin{proof}
Without loss of generality, we assume that $n=2^k-1$ for some integer $k\geq 1$. 
The family of trees $G_k=(V_k,E_k)$ is such that $G_1$ is a single vertex, and $G_k$ is composed of a single vertex connected to two subtrees isomorphic to $G_{k-1}$. Hence $G_k$ has the form of a complete binary tree. We label the $\ell \coloneqq (n+1)/2 = 2^{k-1}$ leaves of the tree $G_k$ by the integers from $0$ to $\ell - 1$ in their order in an inorder traversal of an arbitrary plane embedding of $G_k$.

\begin{figure}
\includegraphics[page=4,scale=.5]{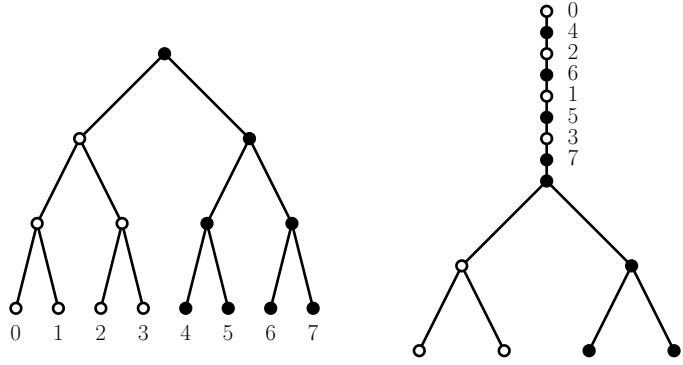}
\caption{\label{fig:example}The trees $T_4$ and $T'_4$. In $T'_4$, the leaves of $T_4$ are ordered by the permutation $\sigma_4$.}
\end{figure}

We now construct two search trees $T_k$ and $T'_k$ on $G_k$, such that the rotation distance between $T$ and $T'$ is $\Omega (n\log n)$.
The first tree $T_k$ is isomorphic to $G_k$.
More precisely, let $r$ be the unique vertex of degree two in $G_k$.
We first choose $r$ as the root of $T_k$. The remaining induced subgraph $G_k-r$ has two connected components isomorphic to $G_{k-1}$. Then we recursively choose the roots of the two subtrees in $T_k$ to be the vertices of degree two in each component, which are the two neighbors of $r$. 

The tree $T'_k$ has the leaf labeled 0 as root, followed by the sequence of leaves of $G_k$ in order of the bit-reversal permutation $\sigma_k$. Since those vertices are leaves of $G_k$, they all have exactly one child in $T'_k$. The last leaf is attached to the root of the remaining subgraph of $G_k$, which retains the same shape as in $T_k$. The trees $T_4$ and $T'_4$ are illustrated on Figure~\ref{fig:example}.

In what follows, we denote by $f(k)$ the rotation distance between $T_k$ and $T'_k$, with $f(1)=0$.
We consider a rotation sequence $\pi$ of length $f(k)$ that transforms $T_k$ into $T'_k$.
Let $A$ and $B$ denote the vertex sets of the two subtrees of the root $r$ of $T_k$.  
A rotation on $u,v$ in $\pi$ will be called an AA-rotation whenever both $u$ and $v$ belong to $A$.
BB-rotations are defined similarly. AB-rotations are rotations on a pair of vertices such that one is in $A$ and the other is in $B$.

\begin{figure}
\includegraphics[page=5,scale=.5]{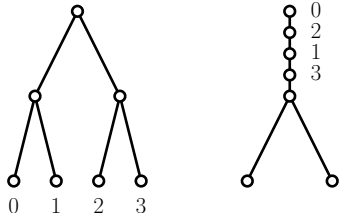}
\caption{\label{fig:split}The trees $T'_4|A\sim T_3$ and $T'_4|A \sim T'_3$.}
\end{figure}

\begin{claim}[Splitting]
The number of AA-rotations and the number of BB-rotations in $\pi$ are both at least $f(k-1)$.
\end{claim}
\begin{proof}
By definitions of the projection of a search tree and of the bit-reversal permutations $\sigma_k$, 
the pair of trees $(T_k|A, T'_k|A)$ is isomorphic to $(T_{k-1}, T'_{k-1})$, in the sense that the bijections between the vertex sets of the 
trees in each pair are the same. An example with $k=4$ is given on Figure~\ref{fig:split}.
The sequence $\pi | A$ contains exactly all the AA-rotations of $\pi$.
By applying the projection Lemma on $\pi$ with $S=A$, we know that the rotation sequence $\pi | A$ transforms $T_k|A$ into $T'_k|A$. 
Hence there is a rotation sequence of the same length transforming $T_{k-1}$ into $T'_{k-1}$. 
Therefore, by definition of $f$, the sequence $\pi | A$ must have length at least $f(k-1)$.
The same holds for the number of BB-rotations, by replacing $A$ by $B$.
\end{proof}

\begin{claim}[Merging]
The number of AB-rotations in $\pi$ is at least $2^{k-2}$.
\end{claim}
\begin{proof}
Let us consider an arbitrary search tree $T$ on $G_k$ and define its {\em alternation number} as the largest number of edges, on any path from the root to a leaf of $T$, with exactly one endpoint in $A$ and one endpoint in $B$. We make the following two observations.

(1) An AB-rotation in $T$ can increase its alternation number by at most two.

To see this, note that a rotation on $u$ and $v$ can only change the number of alternations of the paths involving $u$ or $v$, before or
after the rotation. The net effect of a rotation on a path consists of either swapping the two vertices, removing one, or inserting one.
Each of these changes modifies the alternation number by at most two.

(2) An AA or BB-rotation in $T$ cannot increase its alternation number.

Indeed, if the two vertices $u,v$ on which the rotation is made are both $A$ or $B$ vertices, the number of alternations after
removing one, inserting one, or swapping the two on a path leaves the alternation number unchanged.

We further observe that the alternation number of $T_k$ is 0, while the alternation number of $T'_k$, by construction of the bit-reversal permutation $\sigma_k$, is $\ell - 1$. Altogether, this proves that the number of AB-rotations in $\pi$ must be at least $\lceil (\ell - 1)/2\rceil = 2^{k-2}$. 
\end{proof}

Because the sets of AA, BB, and AB-rotations are disjoint, we obtain:
$$
f(k)\geq 2 f(k-1) + 2^{k-2} = \Omega (k2^k).
$$
Since $n=2^k-1$, this distance is $\Omega (n\log n)$, which concludes the proof.
\end{proof}

\bibliographystyle{plain}
\bibliography{references}

\end{document}